\theoremstyle{plain}
\newtheorem{thm}{Theorem}
\newtheorem{open}{Open Problem}
\theoremstyle{remark}
\newtheorem{rem}{Remark}
\date{Commenced on 27 January 2009 and completed on 17 February 2009 in Melbourne; Revised on 31 March 2009 and 3 July 2009 in Jiaozuo}
\date{}
\begin{document}

\title[Sharpening and generalizations of Carlson's double inequality]
{Sharpening and generalizations of Carlson's inequality for the arc cosine function}

\author[F. Qi]{Feng Qi}
\address[F. Qi]{Department of Mathematics, College of Science, Tianjin Polytechnic University, Tianjin City, 300160, China}
\email{\href{mailto: F. Qi <qifeng618@gmail.com>}{qifeng618@gmail.com}, \href{mailto: F. Qi <qifeng618@hotmail.com>}{qifeng618@hotmail.com}, \href{mailto: F. Qi <qifeng618@qq.com>}{qifeng618@qq.com}}
\urladdr{\url{http://qifeng618.spaces.live.com}}

\author[B.-N. Guo]{Bai-Ni Guo}
\address[B.-N. Guo]{School of Mathematics and Informatics, Henan Polytechnic University, Jiaozuo City, Henan Province, 454010, China}
\email{\href{mailto: B.-N. Guo <bai.ni.guo@gmail.com>}{bai.ni.guo@gmail.com}, \href{mailto: B.-N. Guo <bai.ni.guo@hotmail.com>}{bai.ni.guo@hotmail.com}}
\urladdr{\url{http://guobaini.spaces.live.com}}

\begin{abstract}
In this paper, we sharpen and generalize Carlson's double inequality for the arc cosine function.
\end{abstract}

\keywords{sharpening, generalization, Carlson's double inequality, arc cosine function, monotonicity}

\subjclass[2000]{Primary 33B10; Secondary 26D05}

\thanks{The first author was partially supported by the China Scholarship Council}

\thanks{This paper was typeset using \AmS-\LaTeX}

\maketitle

\section{Introduction and main results}

In~\cite[p.~700, (1.14)]{Carlson-pams-ellip} and~\cite[p.~246, 3.4.30]{mit}, it was listed that
\begin{equation}\label{Carlson's-ineq-arccos}
\frac{6(1-x)^{1/2}}{2\sqrt2\,+(1+x)^{1/2}}<\arccos x<\frac{\sqrt[3]4\,(1-x)^{1/2}}{(1+x)^{1/6}}, \quad 0\le x<1.
\end{equation}
\par
In \cite{Carlson-Arccos.tex}, the right-hand side inequality in~\eqref{Carlson's-ineq-arccos} was sharpened and generalized.
\par
On the other hand, the left-hand side inequality in~\eqref{Carlson's-ineq-arccos} was also generalized slightly in \cite{Carlson-Arccos.tex} as follows: For $x\in(0,1)$, the function
\begin{equation}
F_{1/2,1/2,2\sqrt2\,}(x)=\frac{2\sqrt2\,+(1+x)^{1/2}}{(1-x)^{1/2}}\arccos x
\end{equation}
is strictly decreasing. Consequently, the double inequality
\begin{equation}\label{Carlson-Arccos-ineq-2}
\frac{6(1-x)^{1/2}}{2\sqrt2\,+(1+x)^{1/2}}<\arccos x<\frac{\bigl(1/2+\sqrt2\,\bigr)\pi(1-x)^{1/2}}{2\sqrt2\,+(1+x)^{1/2}}
\end{equation}
holds on $(0,1)$ and the constants $6$ and $\bigl(\frac12+\sqrt2\,\bigr)\pi$ are the best possible.
\par
The aim of this paper is to further generalize the left-hand side inequality in~\eqref{Carlson's-ineq-arccos}.
\par
Our main results may be stated as follows.

\begin{thm}\label{Carlson-Arccos-thm-2}
Let $a$ be a real number and
\begin{equation}
F_{a}(x)=\frac{a+(1+x)^{1/2}}{(1-x)^{1/2}}\arccos x, \quad x\in(0,1).
\end{equation}
\begin{enumerate}
\item
If $a\le\frac{2(\pi-2)}{4-\pi}$, the function $F_a(x)$ is strictly increasing;
\item
If $a\ge2\sqrt2\,$, then the function $F_a(x)$ is strictly decreasing;
\item
If $\frac{2(\pi-2)}{4-\pi}<a<2\sqrt2\,$, the function $F_a(x)$ has a unique minimum.
\end{enumerate}
\end{thm}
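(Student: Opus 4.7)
The plan is to reduce the monotonicity question to the sign analysis of successively simpler auxiliary functions via iterated differentiation. Setting $u=\sqrt{1+x}$ and $v=\sqrt{1-x}$, so that $u^2+v^2=2$ and $u^2-v^2=2x$, a direct computation of $F_a'$ followed by clearing the positive denominator $2uv^3$ yields
\[
2uv^3 F_a'(x) \;=\; (2+au)\arccos x - 2v(a+u) \;=:\; g(x),
\]
so the sign of $F_a'(x)$ on $(0,1)$ coincides with the sign of $g(x)$. The boundary values are readily computed: $g(0)=(\pi-2)+a(\pi/2-2)$ and $\lim_{x\to 1^-}g(x)=0$. Since $\pi/2-2<0$, the sign of $g(0)$ flips precisely at the threshold $a_0:=2(\pi-2)/(4-\pi)$ from the statement.

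Differentiating $g$ once more and again using the identities $u^2+v^2=2$ and $u^2-v^2=2x$ to collapse the cross terms, one obtains the clean expression
\[
g'(x) \;=\; \frac{h(x)}{2u}, \qquad h(x) := a\arccos x - 4\sqrt{1-x},
\]
and one further differentiation gives $h'(x)=(2u-a)/\sqrt{1-x^2}$. Since $2u$ sweeps $(2,2\sqrt 2)$ on $(0,1)$, the qualitative shape of $h$ is governed by where $a$ sits relative to $2$, $2\sqrt 2$ and---via $h(0)=a\pi/2-4$---the auxiliary threshold $8/\pi$; also $h(1)=0$. An elementary estimate confirms the ordering $2<8/\pi<a_0<2\sqrt 2$.

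The three conclusions now follow from a case analysis on $a$. If $a\ge 2\sqrt 2$, then $h'\le 0$ on $(0,1)$, so $h$ decreases to $h(1)=0$ and $h>0$; this makes $g$ strictly increasing, and $g(1)=0$ then forces $g<0$ on $(0,1)$, giving part (2). If $a\le a_0$ the target is $g>0$: for $a\le 8/\pi$ one checks $h<0$ on $(0,1)$, so $g$ strictly decreases from $g(0)\ge 0$ to $g(1)=0$; for $8/\pi<a\le a_0$, $h$ flips sign exactly once, from positive to negative, so $g$ rises from $g(0)\ge 0$ to a positive maximum and then descends to $g(1)=0$, remaining strictly positive on $(0,1)$---this proves part (1). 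If $a_0<a<2\sqrt 2$, the same one-sign-change structure of $h$ holds, but now $g(0)<0$; hence $g$ rises from a negative value to a maximum (necessarily positive, since $g$ must return to $0$ at $x=1$) and then descends to $0$, crossing zero exactly once at some $x_0\in(0,1)$, which is the unique minimum of $F_a$ and gives part (3).

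The principal subtlety is the mismatch between the statement's threshold $a_0$ and the threshold $8/\pi$ at which $h(0)$ changes sign. In the window $(8/\pi,a_0]$ the auxiliary $g$ is no longer monotonic, and positivity of $g$ must be obtained from the combination of the boundary data $g(0)\ge 0$, $g(1)=0$, and the one-sign-change structure of $h$, rather than from monotonicity alone; the borderline case $a=a_0$ in particular requires verifying that $g$ is strictly positive on the open interval even though $g(0)=0$.
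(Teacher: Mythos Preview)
Your argument is correct and, in fact, coincides with the paper's \emph{second} derivation of $F_a'$: the paper also writes $F_a'(x)=\dfrac{1}{2uv^3}\,Q_a(x)$ with $Q_a(x)=(2+au)\arccos x-2v(a+u)$, computes $Q_a'(x)=\dfrac{\arccos x}{2u}\bigl(a-\tfrac{4v}{\arccos x}\bigr)$, and uses the monotonicity of $P(x)=4v/\arccos x$ (your $h$ is just $\arccos x\cdot(a-P(x))$). The difference is in how the intermediate range is handled. The paper only uses this route to settle the extreme cases $a\le 8/\pi$ and $a\ge 2\sqrt2$, and for the window $(8/\pi,\,2\sqrt2\,)$ it switches to a different factorization $F_a'=\text{(positive)}\cdot G_a$ with $G_a'$ proportional to the quadratic-in-$a$ function $H_a(x)=(a^2-4)\sqrt{x+1}-a(x+1)$, then tracks the two root curves $a_1(x),a_2(x)$ of $H_a$ to locate sign changes. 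You instead push the $Q_a$/$h$ route all the way: for $8/\pi<a<2\sqrt2$ you observe that $h$ changes sign exactly once (positive to negative), so $g=Q_a$ has a single interior maximum, and then the boundary data $g(0)\gtrless 0$ (splitting at $a_0$) together with $g(1)=0$ determine whether $g>0$ throughout or has a unique zero. This is a genuinely cleaner and more unified treatment than the paper's two-track argument; the price is only the short extra verification at the borderline $a=a_0$ (where $g(0)=0$ but $g'(0^+)=h(0)/2>0$ since $a_0>8/\pi$), which you correctly flag.
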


\begin{thm}\label{Carlson-Arccos-thm-3}
For $a\le\frac{2(\pi-2)}{4-\pi}$,
\begin{equation}\label{Carlson-Arccos-thm-3-ineq}
\frac{[\pi(1+a)/2](1-x)^{1/2}}{a+(1+x)^{1/2}}<\arccos x <\frac{\bigl(2+\sqrt2\,a\bigr)(1-x)^{1/2}}{a+(1+x)^{1/2}},\quad x\in(0,1).
\end{equation}
For $\frac{2(\pi-2)}{4-\pi}<a<2\sqrt2\,$,
\begin{multline}\label{Carlson-Arccos-thm-3-ineq-max}
\frac{8(1-2/a^2)(1-x)^{1/2}}{a+(1+x)^{1/2}}<\arccos x\\*
<\frac{\max\bigl\{2+\sqrt2\,a, \pi(1+a)/2\bigr\}(1-x)^{1/2}}{a+(1+x)^{1/2}},\quad x\in(0,1).
\end{multline}
For $a\ge2\sqrt2\,$, the inequality~\eqref{Carlson-Arccos-thm-3-ineq} reverses on $(0,1)$.
\par
Moreover, the constants $2+\sqrt2\,a$ and $\frac\pi2(1+a)$ in~\eqref{Carlson-Arccos-thm-3-ineq} and~\eqref{Carlson-Arccos-thm-3-ineq-max} are the best possible.
\end{thm}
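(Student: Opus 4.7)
The plan is to reduce everything to Theorem~\ref{Carlson-Arccos-thm-2} via the two boundary values
\[
F_a(0)=\frac{\pi(1+a)}{2}, \qquad \lim_{x\to 1^-} F_a(x) = 2 + \sqrt{2}\,a,
\]
the second of which follows from the asymptotic $\arccos x \sim \sqrt{2(1-x)}$ as $x\to 1^-$. In Case~(1), the strict increase of $F_a$ immediately gives $\pi(1+a)/2 < F_a(x) < 2+\sqrt{2}\,a$ for $x\in(0,1)$, and rearranging produces~\eqref{Carlson-Arccos-thm-3-ineq}; Case~(3) follows by reversing both inequalities. The upper half of~\eqref{Carlson-Arccos-thm-3-ineq-max} is equally painless: in the intermediate range, $F_a$ has a unique interior minimum by Theorem~\ref{Carlson-Arccos-thm-2}(3), so necessarily $F_a(x)<\max\{F_a(0),F_a(1^-)\}$ on $(0,1)$.

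The substantive step is the lower bound in~\eqref{Carlson-Arccos-thm-3-ineq-max}. I would differentiate $F_a$, clear denominators, and rewrite $F_a'(x_0)=0$ in the form
\[
(2+a\sqrt{1+x_0})\arccos x_0 = 2\sqrt{1-x_0}\,(a+\sqrt{1+x_0}),
\]
then substitute $\arccos x_0$ back into $F_a$ to obtain the closed form
\[
F_a(x_0)=\frac{2(a+v)^{2}}{2+av}, \qquad v:=\sqrt{1+x_0}\in(1,\sqrt{2}).
\]
The problem now reduces to a purely algebraic inequality: show $2(a+v)^2/(2+av)\ge 8(1-2/a^2)$ uniformly in $v$. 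The key observation, and the main obstacle of the proof, is the sum-of-squares identity
\[
a^{2}(a+v)^{2}-4(a^{2}-2)(2+av)=\bigl(a(a-v)-4\bigr)^{2},
\]
which is routine to verify by expansion but not obvious to guess. Since $a>0$ and $2+av>0$ throughout the range, this yields the lower bound, with equality iff $v=a-4/a$; in particular the bound is consistent with the sharp constant $6$ of~\eqref{Carlson-Arccos-ineq-2} at the boundary $a=2\sqrt{2}$, which is a useful sanity check.

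Finally, sharpness of $2+\sqrt{2}\,a$ and $\pi(1+a)/2$ is automatic from the identification of these numbers as $F_a(1^-)$ and $F_a(0^+)$ respectively: in each monotonicity regime, the constant appearing as a one-sided bound for $F_a(x)$ equals the infimum or supremum of $F_a$ on $(0,1)$, and so cannot be replaced by anything strictly better without violating the corresponding one-sided limit.
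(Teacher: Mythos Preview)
Your proposal is correct and follows essentially the same route as the paper: compute the two boundary values of $F_a$, invoke Theorem~\ref{Carlson-Arccos-thm-2} for the monotone cases and the upper bound in the intermediate case, then at the unique minimum $x_0$ eliminate $\arccos x_0$ via the critical-point equation to obtain $F_a(x_0)=2(a+u)^2/(au+2)$ with $u=\sqrt{1+x_0}$, and bound this below by $8(1-2/a^2)$. In fact you go slightly further than the paper, which simply asserts the last inequality; your sum-of-squares identity $a^{2}(a+v)^{2}-4(a^{2}-2)(2+av)=\bigl(a(a-v)-4\bigr)^{2}$ is a clean justification the paper omits.
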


\section{Remarks}

Before proving our theorems, we give several remarks on them as follows.

\begin{rem}
The left-hand side inequality in~\eqref{Carlson's-ineq-arccos} and the double inequality~\eqref{Carlson-Arccos-ineq-2} are the special case $a=2\sqrt2\,$ of the double inequality~\eqref{Carlson-Arccos-thm-3-ineq-max}. This shows that Theorem~\ref{Carlson-Arccos-thm-2} and Theorem~\ref{Carlson-Arccos-thm-3} sharpen and generalize the left-hand side inequality in~\eqref{Carlson's-ineq-arccos}.
\end{rem}

\begin{rem}
It is easy to verify that the function $a\mapsto\frac{1+a}{a+(1+x)^{1/2}}$ is increasing and the function $a\mapsto\frac{2+\sqrt2\,a}{a+(1+x)^{1/2}}$ is decreasing. Therefore, the sharp inequalities deduced from~\eqref{Carlson-Arccos-thm-3-ineq} are
\begin{multline}\label{Carlson-Arccos-thm-3-ineq-sharp-1}
\frac{\pi^2(1-x)^{1/2}}{2\bigl[2(\pi-2)+(4-\pi)(1+x)^{1/2}\bigr]}
<\arccos x \\ <\frac{2\bigl[2\bigl(2-\sqrt2\,\bigr)+\bigl(\sqrt2\,-1\bigr)\pi\bigr](1-x)^{1/2}} {2(\pi-2)+(4-\pi)(1+x)^{1/2}}
\end{multline}
and
\begin{equation}\label{Carlson-Arccos-thm-3-ineq-sharp-2}
\frac{\pi\bigl(1+2\sqrt2\,\bigr)(1-x)^{1/2}}{2\bigl[2\sqrt2\,+(1+x)^{1/2}\bigr]} >\arccos x >\frac{6(1-x)^{1/2}}{2\sqrt2\,+(1+x)^{1/2}}
\end{equation}
on $(0,1)$.
\par
Furthermore, it is not difficult to see that the double inequalities~\eqref{Carlson-Arccos-thm-3-ineq-sharp-1} and~\eqref{Carlson-Arccos-thm-3-ineq-sharp-2} are not included each other.
\end{rem}

\begin{rem}
Let
$$
h_x(a)=\frac{1-2/a^2}{a+(1+x)^{1/2}}
$$
for $\frac{2(\pi-2)}{4-\pi}<a<2\sqrt2\,$ and $x\in(0,1)$. Direct calculation yields
$$
h_x'(a)=\frac{4 \sqrt{1+x}\,+6 a-a^3}{a^3 \bigl(a+\sqrt{1+x}\,\bigr)^2}
$$
which satisfies
\begin{multline*}
(2+a)\bigl(\sqrt3\,-1+a\bigr)\bigl(1+\sqrt3\,-a\bigr)=4+6 a-a^3\\
<a^3 \bigl(a+\sqrt{1+x}\,\bigr)^2h_x'(a)=4 \sqrt{1+x}\,+6 a-a^3\\
<4 \sqrt{2}\,+6 a-a^3=\bigl(a+\sqrt2\,\bigr)^2\bigl(2\sqrt2\,-a\bigr).
\end{multline*}
Accordingly,
\begin{enumerate}
\item
when $\frac{2(\pi-2)}{4-\pi}<a\le1+\sqrt{3}\,$, the function $a\mapsto h_x(a)$ is increasing;
\item
when $1+\sqrt{3}\,<a<2\sqrt2\,$, the function $a\mapsto h_x(a)$ attains its maximum
$$
\frac{4\cos^2\Bigl(\frac13\arctan\frac{\sqrt{1-x}\,}{\sqrt{1+x}\,}\Bigr)-1}{4\Bigl[2\sqrt{2}\, \cos\Bigl(\frac13\arctan\frac{\sqrt{1-x}\,} {\sqrt{1+x}\,}\Bigr)+\sqrt{1+x}\,\Bigr] \cos^2\Bigl(\frac13\arctan\frac{\sqrt{1-x}\,}{\sqrt{1+x}\,}\Bigr)}
$$
at the point
$$
2\sqrt{2}\, \cos\biggl(\frac13\arctan\frac{\sqrt{1-x}\,}{\sqrt{1+x}\,}\biggr).
$$
\end{enumerate}
As a result, the sharp inequalities deduced from~\eqref{Carlson-Arccos-thm-3-ineq-max} are
\begin{equation}\label{further-sharp-ineq-1}
\frac{8\bigl[1-2/\bigl(1+\sqrt{3}\,\bigr)^2\bigr](1-x)^{1/2}}{1+\sqrt{3}\,+(1+x)^{1/2}}<\arccos x <\frac{\pi\bigl(2-\sqrt{2}\,\bigr)(1-x)^{1/2}}{{4-\pi}+\bigl(\pi-2\sqrt{2}\,\bigr)(1+x)^{1/2}}
\end{equation}
and
\begin{equation}\label{further-sharp-ineq-2}
\frac{2\Bigl[4\cos^2\Bigl(\frac13\arctan\frac{\sqrt{1-x}\,}{\sqrt{1+x}\,}\Bigr)-1\Bigr](1-x)^{1/2}} {\Bigl[2\sqrt{2}\, \cos\Bigl(\frac13\arctan\frac{\sqrt{1-x}\,} {\sqrt{1+x}\,}\Bigr)+\sqrt{1+x}\,\Bigr] \cos^2\Bigl(\frac13\arctan\frac{\sqrt{1-x}\,}{\sqrt{1+x}\,}\Bigr)}<\arccos x
\end{equation}
on $(0,1)$.
\end{rem}

\begin{rem}
By the famous software M\textsc{athematica}~7.0 and standard computation, we show that
\begin{enumerate}
\item
the inequality~\eqref{further-sharp-ineq-2} includes the right-hand side inequality in~\eqref{Carlson-Arccos-thm-3-ineq-sharp-2} and the left-hand side inequality in~\eqref{further-sharp-ineq-1};
\item
the left-hand side inequality~\eqref{Carlson-Arccos-thm-3-ineq-sharp-1} and the inequality~\eqref{further-sharp-ineq-2} are not included each other;
\item
the upper bound in~\eqref{further-sharp-ineq-1} is better than those in~\eqref{Carlson-Arccos-thm-3-ineq-sharp-1} and~\eqref{Carlson-Arccos-thm-3-ineq-sharp-2}.
\end{enumerate}
In conclusion, we obtain the following best and sharp double inequality
\begin{multline}
\frac{\pi\bigl(2-\sqrt{2}\,\bigr)(1-x)^{1/2}}{{4-\pi} +\bigl(\pi-2\sqrt{2}\,\bigr)(1+x)^{1/2}}>\arccos x\\
>\max\Biggl\{\frac{2\bigl[4\lambda^2(x)-1\bigr](1-x)^{1/2}} {\bigl[2\sqrt{2}\, \lambda(x)+\sqrt{1+x}\,\bigr] \lambda^2(x)}, \frac{\pi^2(1-x)^{1/2}}{2\bigl[2(\pi-2)+(4-\pi)(1+x)^{1/2}\bigr]}\Biggr\}
\end{multline}
for $x\in(0,1)$, where
\begin{equation}
\lambda(x)=\cos\biggl(\frac13\arctan\frac{\sqrt{1-x}\,} {\sqrt{1+x}\,}\biggr),\quad x\in(0,1).
\end{equation}
\end{rem}

\begin{rem}
This paper is a revised version of the preprint~\cite{Carlson-Arccos-further.tex-arXiv}.
\end{rem}

\begin{rem}
The approach used in this paper to prove Theorem~\ref{Carlson-Arccos-thm-2} and
Theorem~\ref{Carlson-Arccos-thm-3} has been utilized in~\cite{Carlson-Arccos.tex,
Oppeheim-Sin-Cos.tex, Shafer-ArcSin.tex, Shafer-Fink-ArcSin.tex, Shafer-ArcTan.tex-arXiv, Shafer-ArcTan.tex,
Arc-Hyperbolic-Sine-alter.tex} to establish similar monotonicity and inequalities
related to the arc sine, arc cosine and arc tangent functions.
\end{rem}

\section{Proofs of Theorem~\ref{Carlson-Arccos-thm-2} and Theorem~\ref{Carlson-Arccos-thm-3}}

Now we are in a position to verify our theorems.

\begin{proof}[Proof of Theorem~\ref{Carlson-Arccos-thm-2}]
Straightforward differentiation yields
\begin{align*}
F_a'(x)&=\frac{{\sqrt{1-x^2}\bigl(a\sqrt{x+1}\,+2\bigr)}}{2 (x-1)^2 (x+1)} \biggl[\frac{2 (x-1) \bigl(a\sqrt{x+1}\,+x+1\bigr)}{\sqrt{1-x^2}\,\bigl(a\sqrt{x+1}\,+2\bigr)}+\arccos x\biggr]\\
&\triangleq\frac{{\sqrt{1-x^2}\bigl(a\sqrt{x+1}\,+2\bigr)}}{2 (x-1)^2 (x+1)}G_a(x),
\end{align*}
and
\begin{align*}
G_a'(x)&=\frac{\bigl(a^2\sqrt{x+1}\,-ax-a-4\sqrt{x+1}\,\bigr)\sqrt{1-x}\,} {(1+x)\bigl(a\sqrt{x+1}\,+2\bigr)^2}\\
&\triangleq\frac{H_a(x)\sqrt{1-x}\,} {(1+x)\bigl(a\sqrt{x+1}\,+2\bigr)^2}
\end{align*}
\par
It is clear that only if $a\not\in\bigl(-2,-\sqrt2\,\bigr)$ the denominators of $G_a'(x)$ and $G_a(x)$ do not equal zero on $(0,1)$ and that the function $H_a(x)$ has two zeros
$$
a_1(x)=\frac{x+1-\sqrt{x^2+18 x+17}\,}{2\sqrt{x+1}\,}\quad\text{and}\quad a_2(x)=\frac{x+1+\sqrt{x^2+18 x+17}\,}{2\sqrt{x+1}\,}
$$
whose derivatives are
$$
a_1'(x)=\frac{\sqrt{x^2+18 x+17}\,-x-1}{4\sqrt{(1+x)(x^2+18 x+17)}\,}>0
$$
and
$$
a_2'(x)=\frac{1+x+\sqrt{x^2+18 x+17}\,}{4\sqrt{(1+x)(x^2+18 x+17)}\,}>0
$$
with
\begin{gather*}
\lim_{x\to0^+}a_1(x)=\frac{1-\sqrt{17}\,}2,\quad \lim_{x\to1^-}a_1(x)=-\sqrt2\,,\\
\lim_{x\to0^+}a_2(x)=\frac{1+\sqrt{17}\,}2,\quad \lim_{x\to1^-}a_2(x)=2\sqrt2\,.
\end{gather*}
Since the functions $a_1(x)$ and $a_2(x)$ are strictly increasing on $(0,1)$, the following conclusions can be derived:
\begin{enumerate}
\item
When $a\le-2<\frac{1-\sqrt{17}\,}2<-\sqrt2\,$ or $a\ge2\sqrt2\,$, the function $H_a(x)$ and the derivative $G_a'(x)$ are always positive on $(0,1)$, and so the function $G_a(x)$ is strictly increasing on $(0,1)$. From
\begin{equation}\label{G-a-limits}
\lim_{x\to0^+}G_a(x)=\frac{(\pi-4)a+2(\pi-2)}{2(a+2)}\quad\text{and}\quad \lim_{x\to1^-}G_a(x)=0,
\end{equation}
it follows that the functions $G_a(x)$ and $F_a'(x)$ are negative, and so the function $F_a(x)$ is strictly decreasing on $(0,1)$.
\item
When $-\sqrt2\,\le a\le\frac{1+\sqrt{17}\,}2$, the function $H_a(x)$ and the derivative $G_a'(x)$ are negative on $(0,1)$, and so the function $G_a(x)$ is strictly decreasing on $(0,1)$. From~\eqref{G-a-limits}, it is obtained that the function $G_a(x)$ and the derivative $F_a'(x)$ are positive. So the function $F_a(x)$ is strictly increasing on $(0,1)$.
\item
When $\frac{1+\sqrt{17}\,}2<a<2\sqrt2\,$, the functions $H_a(x)$ and $G_a'(x)$ have a unique zero which is the unique maximum point of $G_a(x)$. From~\eqref{G-a-limits}, it is deduced that
\begin{enumerate}
\item
if $\frac{1+\sqrt{17}\,}2<a\le\frac{2(\pi-2)}{4-\pi}$, the functions $G_a(x)$ and $F_a'(x)$ are positive, and so the function $F_a(x)$ is strictly increasing on $(0,1)$.
\item
if $\frac{2(\pi-2)}{4-\pi}<a<2\sqrt2\,$, the functions $G_a(x)$ and $F_a'(x)$ have a unique zero which is the unique minimum point of the function $F_a(x)$ on $(0,1)$.
\end{enumerate}
\end{enumerate}
\par
On the other hand, the derivative $F_a'(x)$ can be rearranged as
\begin{align*}
F_a'(x)&=\frac{{\sqrt{1-x^2}}}{2 (x-1)^2 (x+1)} \biggl[\frac{2 (x-1)
\bigl(a\sqrt{x+1}\,+x+1\bigr)}{\sqrt{1-x^2}\,}+\bigl(a\sqrt{x+1}\,+2\bigr)\arccos
x\biggr]\\
&\triangleq\frac{{\sqrt{1-x^2}}}{2 (x-1)^2 (x+1)} Q_a(x),
\end{align*}
with
\begin{gather*}
Q_a'(x)=\frac{\arccos x}{2\sqrt{x+1}\,} \biggl(a-\frac{4\sqrt{1-x}\,}{\arccos x}\biggr)
\triangleq \frac{\arccos x}{2\sqrt{x+1}\,}[a-P(x)],\\
\begin{split}
P'(x)&=\frac{2(x+1)}{\sqrt{x+1}\, \sqrt{1-x^2}\,(\arccos x)^2}
\biggl[\frac{2\sqrt{1-x^2}\,}{x+1}-\arccos x\biggr]\\
&\triangleq\frac{2(x+1)}{\sqrt{x+1}\, \sqrt{1-x^2}\,(\arccos x)^2}R(x)
\end{split}
\end{gather*}
and
$$
R'(x)=\frac{x-1}{(x+1) \sqrt{1-x^2}}\,<0.
$$
From $\lim_{x\to1^-}R(x)=0$ and the decreasingly monotonic property of $R(x)$, we
obtain that $R(x)>0$, and so the function $P(x)$ is strictly increasing. Since
$$
\lim_{x\to0^+}P(x)=\frac8\pi\quad \text{and} \quad\lim_{x\to1^-}P(x)=2\sqrt2\,,
$$
the function $Q_a(x)$ is strictly decreasing (or increasing, respectively) with respect
to $x\in(0,1)$ for $a\le\frac8\pi$ (or $a\ge2\sqrt2\,$, respectively). By virtue of
$\lim_{x\to1^-}Q_a(x)=0$, it follows that
\begin{enumerate}
\item
if $a\le\frac8\pi$, the function $Q_a(x)$ is positive on $(0,1)$;
\item
if $a\ge2\sqrt2\,$, the function $Q_a(x)$ is negative on $(0,1)$.
\end{enumerate}
These implies that the function $F_a(x)$ is strictly increasing for
$a\le\frac8\pi<\frac{2(\pi-2)}{4-\pi}$ and strictly decreasing for $a\ge2\sqrt2\,$. The
proof of Theorem~\ref{Carlson-Arccos-thm-2} is complete.
\end{proof}

\begin{proof}[Proof of Theorem~\ref{Carlson-Arccos-thm-3}]
Easy calculation gives
\begin{equation*}
\lim_{x\to0^+}F_a(x)=\frac\pi2(1+a)\quad\text{and}\quad \lim_{x\to1^-}F_a(x)=2+\sqrt2\,a.
\end{equation*}
By the monotonicity of $F_a(x)$ procured in Theorem~\ref{Carlson-Arccos-thm-2}, it follows that
\begin{enumerate}
\item
if $a\le\frac{2(\pi-2)}{4-\pi}$, then
$$
\frac\pi2(1+a)<F_a(x)<2+\sqrt2\,a
$$
on $(0,1)$, which can be rearranged as the inequality~\eqref{Carlson-Arccos-thm-3-ineq};
\item
if $a\ge2\sqrt2\,$, the inequality~\eqref{Carlson-Arccos-thm-3-ineq} is reversed;
\item
if $\frac{2(\pi-2)}{4-\pi}<a<2\sqrt2\,$, the function $F_a(x)$ has a unique minimum, so
$$
F_a(x)<\max\Bigl\{\frac\pi2(1+a), 2+\sqrt2\,a\Bigr\}
$$
on $(0,1)$, which is equivalent to the right-hand side inequality~\eqref{Carlson-Arccos-thm-3-ineq-max}.
\par
Furthermore, the minimum point $x_0\in(0,1)$ of the function $F_a(x)$ satisfies
$$
\arccos x_0=\frac{2(1-x_0)\bigl(a\sqrt{x_0+1}\,+x_0+1\bigr)} {\sqrt{1-x_0^2}\,\bigl(a\sqrt{x_0+1}\,+2\bigr)}
$$
and so
$$
F_a(x_0)=\frac{2\bigl(a+\sqrt{x_0+1}\,\bigr)\bigl(a\sqrt{x_0+1}\,+x_0+1\bigr)} {\sqrt{1+x_0}\,\bigl(a\sqrt{x_0+1}\,+2\bigr)}
\triangleq\frac{2(a+u)^2} {au+2}\ge8\biggl(1-\frac2{a^2}\biggr),
$$
where $u=\sqrt{1+x_0}\,\in\bigl(1,\sqrt2\,\bigr)$. The left-hand side inequality in~\eqref{Carlson-Arccos-thm-3-ineq-max} follows.
\end{enumerate}
The proof of Theorem~\ref{Carlson-Arccos-thm-3} is complete.
\end{proof}

\section{An open problem}

Finally, we propose the following open problem.

\begin{open}\label{Carlson-Arccos-open}
For real numbers $\alpha$, $\beta$ and $\gamma$, let
\begin{equation}
F_{\alpha,\beta,\gamma}(x)=\frac{\gamma+(1+x)^\beta}{(1-x)^\alpha}\arccos x, \quad x\in(0,1).
\end{equation}
Find the ranges of the constants $\alpha$, $\beta$ and $\gamma$ such that the function $F_{\alpha,\beta,\gamma}(x)$ is monotonic on $(0,1)$.
\end{open}

\end{document}